\renewcommand{\leq}{\leqslant}
\renewcommand{\geq}{\geqslant}
\renewcommand{\P}{\mathbf{P}}
\newcommand{\F}{\mathbf{F}}
\newcommand{\Q}{\mathbf{Q}}
\newcommand{\C}{\mathbf{C}}
\newcommand{\Z}{\mathbf{Z}}
\newcommand{\Ql}{\Q_\ell}
\newcommand{\Zl}{\Z_\ell}
\newcommand{\kbarre}{{\overline{k}}}
\newcommand{\Krond}{\mathscr{K}}
\newcommand{\Erond}{\mathscr{E}}
\newcommand{\Orond}{\mathscr{O}}
\newcommand{\rk}{\mathrm{rank}}
\newcommand{\et}{{\text{ét}}}
\newcommand{\Gm}{{\mathbf{G}_\mathrm{m}}}
\newcommand{\tors}[2]{{\vphantom{#2}}_{#1}{#2}}
\newcommand{\isoto}{\xrightarrow{\sim}}
\newcommand{\abs}[1]{\left| \mskip1mu #1 \right|}
\DeclareMathOperator{\Pic}{Pic}
\DeclareMathOperator{\Spec}{Spec}
\DeclareMathOperator{\Br}{Br}
\DeclareMathOperator{\NS}{NS}
\DeclareMathOperator{\Div}{Div}
\newtheorem{theorem}{Theorem}[section]
\newtheorem{lemma}[theorem]{Lemma}
\newtheorem{prop}[theorem]{Proposition}
\newtheorem{remark}[theorem]{Remark}
\title{Transcendental Brauer-Manin obstruction on a pencil of elliptic curves}
\author{Olivier Wittenberg}
\date{April 8, 2003}
\address{UMR 8628, Mathématiques, Bâtiment 425, Université de Paris-Sud, F-91405 Orsay, France}
\email{olivier.wittenberg@ens.fr{\normalfont{} (or }olivier.wittenberg@normalesup.org{\normalfont{})}}
\begin{document}
\begin{abstract}
This note gives an explicit example of transcendental Brauer-Manin obstruction to weak approximation.  It has two features which the only previously known
example of such obstruction did not have: the class in the Brauer group which is responsible for the obstruction is divisible, and the underlying algebraic
variety is an elliptic surface.
\end{abstract}
\maketitle

\section{Introduction}

Let $\Br(X)$ denote the cohomological Brauer group $H^2_\et(X,\Gm)$ of a scheme $X$. Let $k$ be a number field and $\kbarre$ be
an algebraically closed extension of $k$. A class in the Brauer group of a projective smooth variety $X$ over $k$ is said to be \emph{algebraic} if
it belongs to the kernel of the restriction map $\Br(X) \rightarrow \Br(X_\kbarre)$, \emph{transcendental} otherwise; this property does not depend on
the choice of $\kbarre$. For any prime number $\ell$, the $\ell$-primary part of the Brauer group over $\C$ fits into an exact sequence
$$
\xymatrix{
0 \ar[r] & (\Ql/\Zl)^{b_2-\rho} \ar[r] & \Br(X_\C)\{\ell\} \ar[r] & H^3(X(\C),\Z)\{\ell\} \ar[r] & 0 \text{,}
}
$$
where $b_2$ and $\rho$ respectively denote the second Betti number and the Picard number of $X_\C$, and $M\{\ell\}$ denotes
the $\ell$-primary part of $M$.
Although this sequence does prove the non-triviality of $\Br(X_\C)$
in many cases, e.g. when $X$ is a $K3$ surface, transcendental classes are in general difficult to exhibit.

Almost all known instances of Brauer-Manin obstruction are thus explained by algebraic classes, the only exceptions being Harari's examples \cite{harari}
with conic bundles over $\P^2_\Q$.
Besides, in the particular case of pencils of curves of genus $1$, results on the Hasse principle have been obtained only under the assumption that
the $2$-primary part of the Brauer group be ``vertical'', and therefore algebraic (see \cite{css}, §4.7).
The rôle of transcendental elements in the Brauer-Manin obstruction thus seems worthy of investigation.
In this note we present an example of transcendental Brauer-Manin obstruction to weak approximation for an elliptic $K3$ surface over $\Q$,
where ``elliptic'' means that it possesses a fibration in curves of genus $1$, with a section, over $\P^1_\Q$. It should be noted
that the class of order $2$ which we will exhibit in $\Br(X_\C)$ enjoys the property of being divisible (because $H^3(X(\C),\Z)=0$ for a $K3$ surface),
which was not the case in Harari's examples.

\section{Preliminaries: $2$-descent and the Brauer group of an elliptic curve}

The subscript in $H^i_\et$ will be dropped, as we will only use étale cohomology.
If $G$ is an abelian group (resp. group scheme), $\tors{n}{G}$ will denote the $n$-torsion subgroup of $G$.
Let $k$ be a perfect field of characteristic different from~$2$.
The Hilbert symbol of a pair of elements $f, g \in k^\star$ will be denoted
$(f, g)$; it is the class of a quaternion algebra in $\tors{2}\Br(k)$.
When $X$ is a geometrically integral variety over $k$ and $L$ is an
extension of $k$, $L(X)$ will denote the function field of~$X_L$.
The canonical morphism $\Br(X) \rightarrow \Br(k(X))$ is injective if in addition $X$ is regular; this fact will be used without further mention.
Let $E$ be an elliptic curve over $k$ whose $2$-torsion points are rational.
Fix an isomorphism of $k$-group schemes $(\Z/2\Z)^2 \isoto \tors{2}E$. The kernel of the evaluation map at the zero section $\Br(E) \rightarrow \Br(k)$
will be denoted $\Br^0(E)$.

\begin{lemma}
\label{plem}
The group $\Br^0(E)$ is canonically isomorphic to $H^1(k,E)$.
\end{lemma}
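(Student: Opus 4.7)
The plan is to run the Leray spectral sequence for $\pi \colon E \to \Spec k$ with coefficients in $\Gm$, namely
$$E_2^{p,q} = H^p(k, R^q\pi_* \Gm) \Longrightarrow H^{p+q}(E, \Gm),$$
and exploit the existence of the zero section $\sigma \colon \Spec k \to E$ to split it.

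First I would identify the low sheaves. Since $E$ is proper and geometrically integral, $\pi_*\Gm = \Gm$. The sheaf $R^1\pi_*\Gm$ is the relative Picard sheaf $\Pic_{E/k}$; using the section to rigidify, and the fact that for an elliptic curve $\Pic^0(E_{\kbarre}) \cong E(\kbarre)$ while the degree map identifies the component group with $\Z$, one gets $R^1\pi_*\Gm \cong E \oplus \Z$ as étale sheaves on $\Spec k$. Crucially, $R^2\pi_*\Gm$ vanishes: its geometric stalk is $\Br(E_{\kbarre})$, which is zero because $E_{\kbarre}$ is a smooth curve over an algebraically closed field (Tsen). Because $\sigma^\star \pi^\star = \mathrm{id}$ on cohomology, every edge map $H^p(k,\Gm) \to H^p(E,\Gm)$ is split injective, so all the transgressions $d_r \colon E_r^{*,q} \to E_r^{*+r, 0}$ landing in a row of the form $H^{*}(k,\Gm)$ must vanish.

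From the vanishing of $R^2\pi_*\Gm$, the $E_\infty$-filtration on $H^2(E,\Gm)$ has only two nonzero quotients, coming from $E_\infty^{2,0}$ and $E_\infty^{1,1}$. The argument above forces $E_\infty^{2,0} = E_2^{2,0} = \Br(k)$, while $E_\infty^{1,1} = E_2^{1,1} = H^1(k, \Pic_{E/k})$. Using $R^1\pi_*\Gm \cong E \oplus \Z$ and $H^1(k,\Z) = \Hom_{\mathrm{cont}}(G_k, \Z) = 0$, the latter group simplifies to $H^1(k, E)$. I therefore expect to land on the short exact sequence
$$0 \longrightarrow \Br(k) \stackrel{\pi^\star}{\longrightarrow} \Br(E) \longrightarrow H^1(k, E) \longrightarrow 0,$$
which is canonically split by $\sigma^\star$. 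Taking kernels of $\sigma^\star$ then yields the canonical isomorphism $\Br^0(E) \cong H^1(k, E)$.

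The only delicate point, and where I would spend the most care, is the identification $R^1\pi_*\Gm \cong E \oplus \Z$ as étale sheaves (rather than just on geometric points): this requires knowing that because $\pi$ admits a section, the relative Picard \emph{functor} already coincides with the sheafification and hence with $R^1\pi_*\Gm$. Everything else is bookkeeping with the five-term sequence extended one step further, made possible by $R^2\pi_*\Gm = 0$. The assumption that the $2$-torsion is rational plays no rôle in the lemma and is not used.
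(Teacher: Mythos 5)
Your argument is correct and is essentially the proof in the paper: both run the Leray spectral sequence for $E \to \Spec(k)$ with coefficients in $\Gm$, use $f_\star\Gm = \Gm$, $R^1 f_\star \Gm = E \oplus \Z$ and Tsen's theorem to kill the higher direct images, and then use the zero section to split off $\Br(k)$ (the paper phrases the splitting via retractions in the five-term exact sequence, you via vanishing of the differentials into the bottom row, which is the same mechanism). Your closing remarks on $R^1\pi_\star\Gm$ and on the irrelevance of the rational $2$-torsion are accurate.
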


\begin{proof}
Let us write the Leray spectral sequence for the structure morphism $f \colon E \rightarrow \Spec(k)$ and the étale
sheaf~$\Gm$. Since $f_\star \Gm = \Gm$, $R^1 f_\star \Gm = E \oplus \Z$ and $R^q f_\star \Gm = 0$ for $q > 1$ by Tsen's theorem, we get an exact sequence
$$
\xymatrix{
\Br(k) \ar[r] & \Br(E) \ar[r] & H^1(k, E) \ar[r] & H^3(k, \Gm) \ar[r] & H^3(E, \Gm) \text{.}
}
$$
The zero section induces retractions of $\Br(k) \rightarrow \Br(E)$ and of $H^3(k, \Gm) \rightarrow H^3(E, \Gm)$, hence
the lemma.
\end{proof}

The Kummer sequence
$$
\xymatrix@R=0ex{
0 \ar[r] & \tors{2}E \ar[r] & E \ar[r]^(0.46){z \mapsto 2z} & E \ar[r] & 0 \text{,}
}
$$
together with the previous lemma and the chosen isomorphism $(\Z/2\Z)^2 \isoto \tors{2}E$, yields the exact sequence
\begin{equation}
\label{sebr}
\xymatrix{
0 \ar[r] & E(k)/2E(k) \ar[r]^\delta & (k^\star/k^{\star 2})^2 \ar[r]^\gamma & \tors{2}\Br^0(E) \ar[r] & 0 \text{.}
}
\end{equation}

We shall need explicit descriptions of the maps $\delta$ and $\gamma$. First choose distinct $p, q \in k^\star$ such that
the Weierstrass equation
\begin{equation}
\label{eqw}
y^2 = x(x-p)(x-q)
\end{equation}
defines $E$ and the points $P=(p,0)$ and $Q=(q,0)$ are respectively sent
to $(1,0)$ and $(0,1)$ via $\tors{2}E \isoto (\Z/2\Z)^2$. It is well-known (see e.g. \cite{silv}, p. 281) that $\delta(M)=(x(M)-q, x(M)-p)$ for
$M \in E(k)$ if $M \not\in\tors{2}E(k)$, that $\delta(P)=(p-q, p(p-q))$ and that $\delta(Q)=(q(q-p), q-p)$.

\begin{prop}
\label{propdeltagamma}
Let $f, g \in k^\star$. The classes of the quaternion algebras $(x-p, f)$ and $(x-q, g) \in \Br(k(E))$ actually belong to $\Br^0(E)$,
and $\gamma(f,g)=(x-p,f)+(x-q,g)$.
\end{prop}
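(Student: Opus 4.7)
The plan is three-fold. First, I would check that $(x-p,f)$ and $(x-q,g)$ are unramified on $E$ and hence lie in $\Br(E)$. A direct local analysis of the Weierstrass equation~\eqref{eqw} gives $\Div(x-p)=2(P)-2(O)$ on $E$ (and an analogous formula for $x-q$). The tame symbol of $(x-p,f)$ at any closed point $M\in E$ then equals $f^{-v_{M}(x-p)}$ modulo squares in $\kappa(M)^{\star}$, which is trivial because $v_{M}(x-p)\in\{-2,0,2\}$ is always even; the same argument handles $(x-q,g)$.

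Second, I would show these classes lie in $\Br^{0}(E)$ by pulling back along the zero section. Taking $u=-x/y$ as a uniformizer at $O$, a short calculation using $y^{2}=x(x-p)(x-q)$ yields $x-p=u^{-2}\varepsilon$ with $\varepsilon\in\Orond_{E,O}^{\star}$ and $\varepsilon(O)=1$. Hence $(x-p,f)=(u^{-2},f)+(\varepsilon,f)=(\varepsilon,f)$ by bilinearity modulo squares (as $u^{-2}$ is a square), whose value at $O$ is $(1,f)=0\in\Br(k)$; the case of $(x-q,g)$ is identical.

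The main obstacle is the third stage: matching the homomorphism $\tilde\gamma\colon(f,g)\mapsto(x-p,f)+(x-q,g)$ with $\gamma$. I would use the decomposition $\tors{2}E=\langle P\rangle\oplus\langle Q\rangle$ to split $\gamma$ into two pieces, each arising from one of the $2$-isogeny exact sequences $0\to\langle P\rangle\to E\xrightarrow{\phi_{P}}E/\langle P\rangle\to 0$ (and its analogue for $Q$), composed with the isomorphism of Lemma~\ref{plem}. Explicit identification of each piece with the corresponding symbol requires unwinding the Leray spectral sequence at the level of cocycles: via the Weil pairing $e_{2}(P,Q)=-1$, the image in $\tors{2}\Br(E)$ of a class in $H^{1}(k,\langle P\rangle)$ is realised by cup product with a canonical class in $H^{1}(E,\langle Q\rangle)$, and the trivialisations $\Div(x-p)=2((P)-(O))$ and $\Div(x-q)=2((Q)-(O))$ furnish the required Kummer representatives, yielding the quaternion algebras $(x-p,f)$ and $(x-q,g)$. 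As a useful cross-check, one can verify directly that $\tilde\gamma\circ\delta=0$: for generic $M\in E(k)$ this reduces to showing $(x-p,x(M)-q)+(x-q,x(M)-p)=0$ in $\tors{2}\Br^{0}(E)$, which follows from the algebraic identity $(x-p)(x(M)-q)-(x-q)(x(M)-p)=(x-x(M))(p-q)$ combined with $y^{2}=x(x-p)(x-q)$; the formulas for $\delta(P)$ and $\delta(Q)$ recalled just before the proposition treat the $2$-torsion cases similarly.
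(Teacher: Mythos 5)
Your stages 1 and 2 are correct and complete: the tame-symbol computation with $\Div(x-p)=2[P]-2[O]$ shows $(x-p,f)$ is unramified on the smooth projective curve $E$, hence lies in $\Br(E)$, and the evaluation at the origin via $x-p=u^{-2}\varepsilon$ with $\varepsilon(O)=1$ puts it in $\Br^0(E)$. This is in fact slightly more self-contained than the paper, which obtains membership in $\Br(E)$ as a byproduct of the identity $\gamma(f,1)=(x-p,f)$ modulo $\Br(k)$ and then uses the same evaluation trick with $y^2/(x-p)^3$ (note $y^2/(x-p)^3=(x-p)\cdot(y/(x-p)^2)^2$, so this is literally your $\varepsilon$).

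The problem is stage 3, which is where all the content of the proposition lives and which you do not actually carry out. The assertion that the composite $k^\star/k^{\star 2}\cong H^1(k,\langle P\rangle)\to H^1(k,E)\cong \Br^0(E)$ is realised by cup product against the class of the double cover $\sqrt{x-p}$ \emph{is} the statement $\gamma(f,1)=(x-p,f)$ in different clothing; writing that it ``requires unwinding the Leray spectral sequence at the level of cocycles'' names the task without performing it. Concretely, two things are missing. First, to compute anything on cochains you need an explicit description of the edge map $\Br(E)\to H^1(k,E)$; the paper imports this from the annexe of Colliot-Th\'el\`ene--Sansuc, namely the commutative diagram identifying that map (via $\Br(k(E))\cong H^2(k,\kbarre(E)^\star)$) with $-\rho$, where $\rho$ is the boundary of $0\to\kbarre(E)^\star/\kbarre^\star\to\Div(E_\kbarre)\to\Pic(E_\kbarre)\to 0$. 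Without some such compatibility the cocycle computation cannot start. Second, granting it, one must actually do the computation: lift $\sigma\mapsto\chi_f(\sigma)P$ to the divisor-valued cochain $\sigma\mapsto\chi_f(\sigma)([P]-[0])$, take its differential, use $\Div(x-p)=2([P]-[0])$ to recognise the resulting $2$-cocycle as $(\sigma,\tau)\mapsto(x-p)^{\chi_f(\sigma)\chi_f(\tau)}$, and then divide by the coboundary of $\sigma\mapsto s^{\chi_f(\sigma)}$ (with $s^2=x-p$) to identify it with the cup product $(x-p)\cup f$. Your ingredients (the divisor trivialisation, the Kummer representative, the cup product) are exactly the right ones, so this is a gap of execution rather than of strategy. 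Finally, be aware that the cross-check $\tilde\gamma\circ\delta=0$ cannot substitute for the identification: it only shows that $\tilde\gamma$ kills the image of $\delta$, which does not pin down a homomorphism $(k^\star/k^{\star 2})^2\to\tors{2}\Br^0(E)$ among all those with that property.
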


\begin{proof}
By symmetry, it is enough to prove that $\gamma(f,1)=(x-p,f)$ in $\Br(k(E))$.
Choose a separable closure $\kbarre$ of $k$ and let $G_k$ be its Galois group over $k$.
Likewise, choose a separable closure $\overline{k(E)}$ of $\kbarre(E)$ and let $G_{k(E)}$ be its Galois group over $k(E)$.
It follows from the Hochschild-Serre spectral sequence, Tsen's theorem and Hilbert's theorem~90
that the inflation map
$H^2(k, \kbarre(E)^\star) \rightarrow \Br(k(E))$ is an isomorphism.
Let $\rho \colon H^1(k,E) \rightarrow H^2(k,\kbarre(E)^\star/\kbarre^\star)$
denote the composition of the canonical isomorphism $H^1(k,E) \isoto H^1(k,\Pic(E_\kbarre))$ and the boundary of the exact sequence
$$
\xymatrix{
0 \ar[r] & \kbarre(E)^\star / \kbarre^\star \ar[r] & \Div(E_\kbarre) \ar[r] & \Pic(E_\kbarre) \ar[r] & 0 \text{.}
}
$$
As shown in the annexe of \cite{ctsan}, the diagram
$$
\xymatrix@R=2ex{
\Br(k) \ar[r] \ar@{=}[ddd] & \Br(E) \ar[r]^(0.47)\theta \ar|(.48){\bigcap}@{}[d] & H^1(k,E) \ar[ddd]^{-\rho} \\
& \Br(k(E)) \ar[dd]^(0.45)\wr \\ \\
\Br(k) \ar[r] & H^2(k, \kbarre(E)^\star) \ar[r] & H^2(k, \kbarre(E)^\star/\kbarre^\star)
}
$$
commutes, where $\theta$ denotes the map which stems from the Leray spectral sequence (see lemma \ref{plem}).
This enables us to carry out cocycle calculations for determining the image of $\gamma(f,1)$
in $H^2(k,\kbarre(E)^\star/\kbarre^\star)$. We shall use the standard cochain complexes.
Let $\chi_f \colon G_k \rightarrow \Z$ be the map with image in $\{0,1\}$ whose composition with the projection
$\Z \rightarrow \Z/2\Z$ is the quadratic character associated with $f \in k^\star/k^{\star 2} = H^1(G_k, \Z/2\Z)$.
The image of $(f,1)$ in $H^1(k, E)$ is represented by the $1$-cocycle $a \colon \sigma \mapsto \chi_f(\sigma) P$.
If $M \in E(k)$, let $[M]$ denote the corresponding divisor on $E_\kbarre$.
The $1$-cochain with values in $\Div(E_\kbarre)$ defined by $\sigma \mapsto \chi_f(\sigma) ([P]-[0])$ is a lifting of~$a$.
Its differential $(\sigma, \tau) \mapsto (\chi_f(\sigma) + \chi_f(\tau) - \chi_f(\sigma\tau))([P]-[0])$ is, as expected,
a $2$-cocycle with values in $\kbarre(E)^\star/\kbarre^\star$, which we may rewrite as $(\sigma, \tau) \mapsto (x-p)^{\chi_f(\sigma)\chi_f(\tau)}$;
it represents the image of $\gamma(f,1)$ in $H^2(k,\kbarre(E)^\star/\kbarre^\star)$. Since $x-p$ is invariant under $G_k$, the same formula
defines a $2$-cocycle on $G_k$ with values in $\kbarre(E)^\star$. We thus end up with a $2$-cocycle
$$
\begin{array}{rcl}
b \colon G_{k(E)} \times G_{k(E)} & \longrightarrow & \overline{k(E)}^\star \\
(\sigma, \tau) & \longmapsto & (x-p)^{\chi_f(\sigma)\chi_f(\tau)}
\end{array}
$$
which represents the image of $\gamma(f, 1)$ in $\Br(k(E))$, at least modulo $\Br(k)$, where $\chi_m$ now denotes the lifting with values
in $\{0,1\}$ of the quadratic character on $k(E)$ associated with $m \in k(E)^\star$. (Note that $k$ is separably closed in $k(E)$, so that
$G_k$ identifies with a quotient of $G_{k(E)}$.) Choose a square root $s$ of $x-p$ in $\overline{k(E)}$.
Dividing $b$ by the differential of the $1$-cochain $\sigma \mapsto s^{\chi_f(\sigma)}$ gives the $2$-cocycle
$(\sigma, \tau) \mapsto (-1)^{\chi_{x-p}(\sigma)\chi_f(\tau)}$, which does represent the image of the cup-product $(x-p) \cup f$ by
the composite map $H^1(k(E),\Z/2\Z)^{\otimes 2} \rightarrow H^2(k(E), \Z/2\Z) \rightarrow \Br(k(E))$.

We have now proved that $\gamma(f,1) = (x-p,f)$ in $\Br(k(E))/\Br(k)$, but the equality holds in $\Br(k(E))$ since $(x-p,f)=(y^2/(x-p)^3,f)$
evaluates to $0$ at the zero section.
\end{proof}

\section{An actual example}

The reader is referred to \cite{harari} for the definitions of weak approximation, Brauer-Manin obstruction, residue maps and unramified Brauer group.

Let $\Omega$ denote the set of places of $\Q$.
Define the polynomials $p, q \in \Q[t]$ by $p(t) = 3(t-1)^3(t+3)$ and $q(t)=p(-t)$. It will be useful
to notice that $p(t)-q(t) = 48 t$. Let $E$ be the elliptic curve over $\Q(t)$ defined by~(\ref{eqw}).
Denote by $\Erond$ its minimal proper regular model over $\P^1_\Q$ (see \cite{shaf}); it is a smooth surface over $\Q$ endowed
with a proper flat morphism $f \colon \Erond \rightarrow \P^1_\Q$ whose generic fibre is isomorphic to $E$. A geometric fibre of $f$
is either smooth or is a union of rational curves whose intersection numbers may be computed with Tate's algorithm~\cite{tate}. One finds
the following reduction types, in Kodaira's notation \cite{kod}: $I_2$ above $t=0$, $t=3$ and $t=-3$; $I_6$ above $t=1$, $t=-1$ and $t=\infty$; the
other fibres are smooth. Recall that a fibre of type $I_n$ has $n$ irreducible components $(C_i)_{1 \leq i \leq n}$, with $(C_i.C_{i+1})=1$,
$(C_1.C_n)=1$ and $(C_i.C_j)=0$ if $n-1>\abs{j-i}>1$.
Put $$A = \gamma(6t(t+1), 6t(t-1)) = (x-p, 6t(t+1)) + (x-q, 6t(t-1)) \in \Br(E) \text{.}$$

\begin{prop}
The class $A \in \Br(E)$ belongs to the subgroup $\Br(\Erond)$.
\end{prop}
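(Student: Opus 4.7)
The plan is to use \emph{purity}: since $\Erond$ is a smooth surface over $\Q$, Grothendieck's purity theorem identifies $\Br(\Erond)$ with the subgroup of $\Br(\Q(E))$ consisting of classes whose residue vanishes at every prime divisor of $\Erond$. Prime divisors of $\Erond$ split into horizontal ones (dominating $\P^1_\Q$) and vertical ones (contained in a fibre of $f$), and I would check the vanishing of $\partial_D A$ in each case.

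A horizontal prime divisor of $\Erond$ corresponds to a closed point of the generic fibre $E$, and the local rings agree because $\Erond \to \P^1_\Q$ is flat. Since $A$ lies in $\Br(E)$ by Proposition~\ref{propdeltagamma} and $E$ is a regular curve over $\Q(t)$, the residue of $A$ at every such point is zero. Horizontal divisors therefore contribute nothing, and everything reduces to vertical ones.

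It is harmless to work one fibre at a time. Above any $v \in \P^1_\Q \setminus \{0, \pm 1, \pm 3, \infty\}$ the fibre is smooth and irreducible, so the unique vertical divisor has multiplicity one; along it the four functions $x - p$, $x - q$, $6t(t+1)$, $6t(t-1)$ are all units, hence both Hilbert symbols have trivial residue. The real work is concentrated at the six singular fibres above $t = 0, \pm 1, \pm 3, \infty$, each of Kodaira type $I_n$ with $n \in \{2, 6\}$. At a component $C$ of such a fibre, the standard formula for the residue of a Hilbert symbol reduces the computation to the four valuations $w_C(x-p)$, $w_C(x-q)$, $w_C(6t(t+1))$, $w_C(6t(t-1))$, together with the reductions of the associated units modulo squares in $\kappa(C)$.

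The valuations of $6t(t \pm 1)$ are trivial to compute, since these functions pull back from $\P^1_\Q$ and all components of the $I_n$ fibres occur with multiplicity one. The main obstacle is the determination of the vertical contributions to $\Div(x-p)$ and $\Div(x-q)$: on the generic fibre their divisors are $2[P] - 2[O]$ and $2[Q] - 2[O]$, but the closures of the sections $O$, $P$, $Q$ on $\Erond$ may meet several components of each singular fibre, and the precise coefficients require running Tate's algorithm (or performing explicit blow-ups of the Weierstrass model) at $t = 0$, $t = 1$ and $t = 3$, tracking which component each section passes through. The symmetry $t \leftrightarrow -t$, which swaps $p$ and $q$ (and hence the two summands of $A$), halves the work, and the place $t = \infty$ is reduced to the same analysis by the change of coordinate $t \leftrightarrow 1/t$. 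Once the valuations are in hand, checking that on each component the residues of $(x - p, 6t(t+1))$ and $(x - q, 6t(t-1))$ cancel modulo squares in $\kappa(C)$ is a routine verification, completing the proof.
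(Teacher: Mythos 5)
Your reduction is sound as far as it goes: purity for the regular surface $\Erond$ does reduce membership in $\Br(\Erond)$ to the vanishing of residues at codimension-one points, the horizontal ones are indeed disposed of by the fact that $A$ already lies in $\Br(E)$ for the regular proper curve $E$ over $\Q(t)$, and the smooth fibres contribute nothing since all four entries of the two symbols are units along them. But at that point you have only restated the proposition as a finite computation; you have not done the computation. The entire content of the statement is the vanishing of the residues along the components of the six singular fibres, and you defer exactly this to ``running Tate's algorithm\dots tracking which component each section passes through'' followed by ``a routine verification.'' Neither step is routine nor carried out: determining the vertical parts of $\Div(x-p)$ and $\Div(x-q)$ requires the explicit chain of blow-ups producing $\Erond$ from the singular Weierstrass model (the paper itself remarks later that this is lengthy enough to warrant a computer program), and whether the residues then cancel is not automatic --- it depends on the specific choice of the functions $6t(t+1)$ and $6t(t-1)$, which were engineered for this purpose. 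As written, the proposal is a proof plan with the decisive verification missing.

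It is worth noting that the paper's argument sidesteps the explicit model entirely: it takes an \emph{arbitrary} discrete rank-one valuation $v$ on $\Q(\Erond)$ trivial on $\Q$ (which covers all divisorial valuations, horizontal and vertical alike, in one stroke) and controls the residue of $A$ using only the Weierstrass equation $y^2=x(x-p)(x-q)$, the identity $p-q=48t$, and an auxiliary lemma that $(-p,6t(t+1))+(-q,6t(t-1))$ is unramified over $\P^1_\Q$. The case analysis on the relative sizes of $v(x)$, $v(p)$, $v(q)$ extracts the needed parities and square classes directly from the equation, with no knowledge of which fibre components the sections $O$, $P$, $Q$ meet. If you want to salvage your route, you must either actually produce the regular model and tabulate the valuations $w_C(x-p)$, $w_C(x-q)$ and the relevant square classes in $\kappa(C)$ for every component $C$ of the fibres over $t=0,\pm1,\pm3,\infty$, or replace that computation by an argument, as in the paper, that never requires it.
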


\begin{proof}
Let $v$ be a discrete rank $1$ valuation on $\Q(\Erond)$ whose restriction to~$\Q$ is trivial,
and $\kappa$ be its residue field. We shall prove that $A$ has trivial residue at $v$.
Let us choose a uniformiser $\pi$ of $v$ and put
$\tilde{z}=z \pi^{-v(z)}$ for $z \in \Q(\Erond)^\star$. It will be convenient to denote by
$V \colon \Q(\Erond)^\star \rightarrow \Z \times \kappa^\star$ the group homomorphism
$z \mapsto (v(z), [\tilde{z}])$, where $[u]$ denotes the class in $\kappa$ of $u \in \Q(\Erond)$ if $v(u)=0$.
For $f, g \in \Q(\Erond)^\star$, the residue of the quaternion algebra $(f, g)$ at $v$ is given by the tame symbol formula
$$
\partial_v(f,g) = (-1)^{v(f)v(g)} \left[ \frac{f^{v(g)}}{g^{v(f)}} \right]
= (-1)^{v(f)v(g)} \left[\tilde{f}\right]^{v(g)} \left[\vphantom{\tilde{f}} \tilde{g}\right]^{v(f)}
 \in \kappa^\star / \kappa^{\star 2} \text{.}
$$
Note that it only depends on $V(f)$ and $V(g)$. Furthermore, if $V(f)$ is a double,
i.e. if $v(f)$ is even and $\tilde{f}$ is a square modulo $\pi$, then $\partial_v(f,g)=1$.
These remarks will be used implicitly throughout the proof.

\begin{lemma}
\label{lemmenr}
The class $(-p, 6t(t+1)) + (-q, 6t(t-1)) \in \Br(\Q(t))$ is unramified over $\P^1_\Q$.
\end{lemma}

\begin{proof}
The residue at a closed point of $\P^1_\Q$ other than $t=\alpha$ for $\alpha\in\{-3,-1,0,1,3,\infty\}$
is obviously trivial. It is straightforward to check that the remaining residues are also
trivial.
\end{proof}

Let us now turn to showing that $\partial_v(A)=1$. As $A$ is invariant under $t \mapsto -t$,
we may assume $v(p) \leq v(q)$. If $v(x) < v(p)$, then $V(x-p)=V(x-q)=V(x)$, from which
we deduce thanks to (\ref{eqw}) that $V(x-p)$ and $V(x-q)$ are doubles.
If $v(x) > v(q)$, then $V(x-p)=V(-p)$ and $V(x-q)=V(-q)$, hence the result by lemma~\ref{lemmenr}. From now on, we may and will therefore assume $v(p) \leq v(x) \leq v(q)$.

To begin with, suppose $v(p)<v(q)$. In this case, either $v(t-3)>0$ or $v(t+1)>0$.
If $v(x)=v(q)$, then $V(x-p)=V(-p)$, hence $\partial_v(A)=\partial_v(-q(x-q), 6t(t-1))$
by lemma \ref{lemmenr}; but with a look at (\ref{eqw}), one finds that both $v(-q(x-q))$ and $v(6t(t-1))$ are even.
Suppose now $v(x)<v(q)$. It follows from (\ref{eqw}) that $V(x-p)$ is a double,
hence $\partial_v(A)=\partial_v(x-q,6t(t-1))=\partial_v(x,6t(t-1))$.
If $v(x)$ is even or if $[6t(t-1)]$ is a square in~$\kappa$, which happens if $v(t-3)>0$, we get $\partial_v(A)=1$.
If on the other hand $v(t+1)>0$ and $v(x)$ is odd, then $[6t(t-1)]=12$, which (\ref{eqw})
shows to be a square in $\kappa$.

We are now left with the case $v(p)=v(q)=v(x)$. If $v(t)=0$, then $v(t-3)=v(t-1)=v(t+1)=v(t+3)=0$,
so $v(6t(t+1))=v(6t(t-1))=0$ and it suffices to prove that $v(x-p)$ and $v(x-q)$ are
even, which follows from (\ref{eqw}) and the equality $v(p)=v(x)=v(q)=v(p-q)=0$.
If $v(t)<0$, then $V(6t(t+1))=V(6t(t-1))$, so that $\partial_v(A)=\partial_v(x, 6t(t+1))$, which is trivial
since both $v(x)=v(p)=4v(t)$ and $v(6t(t+1))$ are even.
Suppose finally that $v(t)>0$. If $v(x-p)<v(t)$, then $V(x-p)=V(x-q)$ since $v(p-q)=v(t)$,
and $\partial_v(A)=\partial_v(x-p,(t+1)(t-1))=\partial_v(x-p,-1)$; if $v(x-p)=0$, the residue is obviously trivial,
and if $v(x-p)>0$, which means that $[\tilde{x}]=[\tilde{p}]=-9$, (\ref{eqw}) shows that $-1$ is a square in $\kappa$.
We therefore assume $v(x-p)\geq v(t)$, which still leads to $[\tilde{x}]=[\tilde{p}]=-9$.
As $v(p-q)=v(t)$, at least one of $v(x-p)$ and $v(x-q)$ is equal to $v(t)$. In either case, (\ref{eqw}) implies that $v(x-p)+v(t)$ is even,
so $(-9)^{v(t)}(-1)^{v(x-p)}$ is a square, hence
$\partial_v(A)=\partial_v(x,6t(t-1))+\partial_v(x-p,(t+1)(t-1))$ is trivial.
\end{proof}

We shall now prove the following.

\begin{theorem}
The class $A \in \Br(\Erond)$ is transcendental and yields a Brauer-Manin obstruction to weak approximation on the projective smooth surface $\Erond$ over $\Q$.
\end{theorem}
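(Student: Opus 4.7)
The plan is to handle transcendentality and the Brauer--Manin obstruction separately.

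For transcendentality, I would show that $A$ has non-zero image in $\Br(\Erond_\kbarre)$ by working on the generic fibre. Regularity of $\Erond_\kbarre$ gives an injection $\Br(\Erond_\kbarre) \hookrightarrow \Br(\kbarre(\Erond)) = \Br(\kbarre(t)(E))$, and Tsen's theorem yields $\Br(\kbarre(t)) = 0$, so $\Br(E_{\kbarre(t)}) = \Br^0(E_{\kbarre(t)})$. Hence, by the exact sequence~(\ref{sebr}) over $\kbarre(t)$, it suffices to verify that $(6t(t+1), 6t(t-1))$ does not lie in the image of $\delta$. To bound this image I would apply the Shioda--Tate formula: the singular fibres contribute $3\cdot 1 + 3\cdot 5 = 18$, giving $\rho(\Erond_\kbarre) = 20 + \mathrm{rank}\,E(\kbarre(t))$; since $\Erond$ is a $K3$ surface, $\rho \leq h^{1,1} = 20$, so the Mordell--Weil rank vanishes and $E(\kbarre(t))$ is finite. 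A case analysis of its possible torsion structure — using that torsion sections embed into the product of component groups at the bad fibres, namely $(\Z/2\Z)^3 \times (\Z/6\Z)^3$, together with the fact that $\tors{2}E = \{O, P, Q, P+Q\}$ is already rational — identifies $E(\kbarre(t))/2E(\kbarre(t))$. It remains to check, using $\delta(P) = (p-q, p(p-q))$, $\delta(Q) = (q(q-p), q-p)$, $p - q = 48t$, $p = 3(t-1)^3(t+3)$ and $q = 3(t+1)^3(t-3)$, that $A \equiv (t(t+1), t(t-1))$ in $(\kbarre(t)^\star/\kbarre(t)^{\star 2})^2$ does not match any image element — the parities of the $(t \pm 1)$-adic valuations of the two coordinates already suffice to distinguish them. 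Divisibility of the resulting class is then automatic: $H^3(\Erond(\C), \Z) = 0$ for a $K3$ surface, so by the exact sequence of the introduction $\Br(\Erond_\C) \cong (\Q/\Z)^{b_2 - \rho}$ is divisible.

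For the Brauer--Manin obstruction, I would compute the local invariants $\mathrm{inv}_v(A(P_v)) \in \tfrac{1}{2}\Z/\Z$ as $(P_v)$ varies in $\prod_v \Erond(\Q_v)$. The steps are: (i) prove $\mathrm{inv}_v(A(P_v)) = 0$ identically at every place $v$ outside a finite set $S$ (containing at least $\{2,3,\infty\}$ and the primes of bad reduction of $\Erond$), by combining the Hilbert symbol expression of $A$ with the tame symbol formula and the list of singular fibres; (ii) for $v \in S$, tabulate the finitely many values of the locally constant map $P_v \mapsto \mathrm{inv}_v(A(P_v))$ on $\Erond(\Q_v)$; (iii) combine these tables to exhibit an explicit adelic point at which $\sum_v \mathrm{inv}_v(A(P_v)) = \tfrac{1}{2}$, which rules out weak approximation.

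The hard part will be step (iii): a global reciprocity computation must weave the local invariants together to land on $\tfrac{1}{2}$, and this is where the particular shape of $p(t)$ and $q(t)$ should be decisive. The transcendentality argument is comparatively routine once Shioda--Tate has pinned down the Mordell--Weil group and the image of $\delta$.
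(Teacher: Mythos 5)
Your transcendentality argument is essentially the paper's: injectivity of $\Br(\Erond_\kbarre)\hookrightarrow\Br(\kbarre(\Erond))$, Tsen, the sequence~(\ref{sebr}) over $\C(t)$, and Shioda--Tate forcing Mordell--Weil rank $0$. Two small points. First, the ``case analysis of the torsion structure'' is unnecessary: once $E(\C(t))$ is finite, $E(\C(t))/2E(\C(t))$ has the same order as $\tors{2}E(\C(t))=(\Z/2\Z)^2$, so the quotient is automatically $2$-dimensional and spanned by the images of $P$ and $Q$. Second, your claimed separating invariant is wrong: the $(t\pm1)$-adic valuation parities of $(t(t+1),t(t-1))$ coincide with those of $\delta(P)+\delta(Q)=((t+1)(t-3),(t-1)(t+3))$, so they do \emph{not} suffice; you must also use the $t$-adic (or $(t\mp3)$-adic) valuations, which do finish the job since the ratio of first coordinates is $t/(t-3)$. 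This is a trivially repairable slip, not a gap.

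On the obstruction half you take a genuinely different and much heavier route, and you have misidentified where the difficulty lies. No global reciprocity computation and no tabulation at every bad place is needed. The key observation you are missing is that $A\in\Br^0(E)$, so its evaluation $A(N)$ at a rational point $N$ on the zero section is \emph{zero in $\Br(\Q)$}, hence has vanishing invariant at every place simultaneously. One then needs only a single local computation: a point $M_2\in\Erond(\Q_2)$ (the paper takes $x=1$, $t=2$) with $\mathrm{inv}_2(A(M_2))=\tfrac12$, and the adelic point equal to $N$ away from $2$ and to $M_2$ at $2$ has total invariant $\tfrac12$. Equivalently, all that is ever required is non-constancy of $P_v\mapsto\mathrm{inv}_v(A(P_v))$ at one place together with the existence of an adelic point; your step~(iii) is then automatic, not hard. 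Your plan as written would eventually succeed (your table at $v=2$ would exhibit the non-constancy), but steps (i) and (ii) at the other places amount to the ``tedious computation'' that the paper deliberately relegates to a remark, and framing (iii) as a reciprocity problem suggests you would be looking for the wrong thing.
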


\begin{proof}
Let us first deal with the second part of the assertion. A glance at equation (\ref{eqw}) shows that $\Erond$ has a $\Q_2$-point
$M_2$ with coordinates $x=1$ and $t=2$. (Indeed, this equation defines an affine surface over $\Q$ endowed with a morphism
to $\P^1_\Q$ whose smooth locus identifies with an open subset of $\Erond$.) Using the formula given in~\cite{serre}, Ch.~XIV, §4,
one easily checks that $A(M_2)$ is non-trivial. Now choose $N \in \Erond(\Q)$ in the image of the zero section and let
$M_v \in \Erond(\Q_v)$ be equal to $N$ for any $v \in \Omega\setminus\{2\}$.
This defines an adelic point $(M_v)_{v \in \Omega}$. The class $A(N) \in \Br(\Q)$ is trivial since $A \in \Br^0(E)$;
consequently, the evaluation of $A$ at $(M_v)_{v \in \Omega}$ is non-trivial, which is an obstruction to weak approximation.

It remains to be shown that $A$ is transcendental. The exact sequence (\ref{sebr}) reduces this to the computation
of $E(\C(t))/2E(\C(t))$.

\begin{lemma}
The surface $\Erond$ is a $K3$ surface.
\end{lemma}

\begin{proof}
The topological Euler-Poincaré characteristic $e(\Erond_\C)$ of $\Erond_\C$ can be expressed in terms of that of the fibres and that of the base
(\cite{bpv}, p.~97, prop.~11.4), which leads to $e(\Erond_\C)=24$. Let $\chi(\Orond_\Erond)$ denote the Euler-Poincaré characteristic of the
coherent sheaf $\Orond_\Erond$. The canonical bundle $\Krond_\Erond$ of $\Erond$ is simply $f^\star \Orond(\chi(\Orond_\Erond)-2)$ (see
\cite{bpv}, p.~162, cor.~12.3); in particular it has self-intersection $0$, hence $\chi(\Orond_\Erond)=2$ by Noether's formula. We have now
proved the triviality of $\Krond_\Erond$. That $H^1(\Erond, \Orond_\Erond)=0$ follows from $\chi(\Orond_\Erond)=2$ and Serre duality.
\end{proof}

\begin{lemma}
The elliptic curve $E$ has Mordell-Weil rank $0$ over $\C(t)$.
\end{lemma}

\begin{proof}
Let $\rho(\Erond_\C)$ be the Picard number of $\Erond_\C$ and $R$ be the subgroup of the Néron-Severi group $\NS(\Erond_\C)$ spanned by the zero
section and the irreducible components of the fibres. As follows from the output of Tate's algorithm, $R$ has rank $20$.
On the other hand, $\rho(\Erond_\C) \leq 20$ since $\Erond$ is a $K3$ surface. The Shioda-Tate formula $$\rho(\Erond_\C) = \rk(E(\C(t))) + \rk(R)$$
thus yields the result.
\end{proof}

This lemma shows that the $\F_2$-vector space $E(\C(t))/2E(\C(t))$ has dimension $2$. Now the classes $\delta(P)=(t,t(t-1)(t+3))$
and $\delta(Q)=(t(t+1)(t-3), t)$ are independent over $\F_2$, hence span the whole kernel of~$\gamma$. On the other hand $(t(t+1), t(t-1))$
is evidently not a combination of $\delta(P)$ and $\delta(Q)$, so that $A$ has non-zero image in $\Br(\C(\Erond))$ and is therefore transcendental.
\end{proof}

\begin{remark} \normalfont
It is actually true that $A(M)=0$ in $\Br(\Q)$ for \emph{all} $M \in \Erond(\Q)$. This is a consequence of the global reciprocity law
and the fact that $A$ vanishes on $\Erond(\Q_v)$ for all $v \in \Omega\setminus\{2\}$, which can be checked by a tedious computation.
\end{remark}

\begin{remark} \normalfont
It is possible to determine $\tors{2}\Br(\Erond)$ completely if one is willing to compute explicit equations for~$\Erond$. This involves
blowing up the singular surface given by equation (\ref{eqw}) a sufficient number of times. Alternatively, one may observe that all fibres
have type $I_n$ (in other words, $\Erond \rightarrow \P^1_\Q$ is semi-stable), and then use the equations given by Néron in this case
in \cite{neron}, §III.
Either way one finds that $\tors{2}\Br(\Erond)$ is spanned by $A$ modulo $\tors{2}\Br(\Q)$ after writing out all possible residues of a general
class $\gamma(f,g)$.
On the other hand, the $2$-torsion subgroup of the Brauer group of a complex $K3$ surface with Picard number $20$ has
rank $2$ over $\F_2$, so $\tors{2}\Br(\Erond_\C)$ is strictly larger than $\tors{2}\Br(\Erond)/\tors{2}\Br(\Q)$.
It turns out that $\tors{2}\Br(\Erond_\C)$ is spanned by $A$ and the class of the quaternion algebra $(x, t)$, which unexpectedly belongs
to $\Br(\Q(\Erond))$ and only gets unramified after extension of scalars to $\Q(\sqrt{-1}, \sqrt{3})$.
\end{remark}

\begin{remark} \normalfont
In the semi-stable case, a computer program was written to carry out the calculations alluded to in the previous paragraph, as they
often get quite lengthy. Its source code is available on request.
\end{remark}

\section*{Acknowledgements}

The author is most grateful to J-L. Colliot-Thélène for sharing unpublished notes on the topic (which contain in particular
the statement of proposition \ref{propdeltagamma}), and would also like
to thank him for his encouragements and many helpful conversations during the course of this research.

\bibliographystyle{amsplain}
\bibliography{transcendental}

\end{document}